\documentclass[12pt]{amsart}
\usepackage[colorlinks=true,citecolor=blue,linkcolor=blue]{hyperref}
\usepackage{amssymb,amscd,amsmath,graphicx,mathtools}
\usepackage{enumerate}
\usepackage{fullpage}
\usepackage{comment}
\usepackage{xcolor}
\usepackage[all]{xy}
\usepackage{cleveref}
\usepackage{tikz-cd}
\usepackage{enumerate}
\usepackage{fullpage}
\usepackage{xcolor}
\usepackage{amsthm}
\usepackage{float}
\usepackage{pgfplots}
\usepackage{listings}
\usepackage{booktabs}
\usepackage{comment}
\usepackage{graphicx}
\newtheorem{alphthm}{Theorem}

\usepackage{csquotes}

\numberwithin{equation}{section}
\newtheorem{thm}{Theorem}[section]

\newtheorem{cor}[thm]{Corollary}
\newtheorem{prop}[thm]{Proposition}
\newtheorem{lem}[thm]{Lemma}

\newtheorem{quest}[thm]{Question}

\theoremstyle{definition}
\newtheorem{defn}[thm]{Definition}
\newtheorem{rem}[thm]{Remark}
\newtheorem{exmp}[thm]{Example}

\newtheorem{setup}[thm]{Setup}

\newcommand{\mfrak}{\mathfrak m}
\newcommand{\bI}{\overline{I}}
\newcommand{\im}{\operatorname{im}}
\newcommand{\rank}{\operatorname{rank}}
\newcommand{\HS}{\operatorname{HS}}
\newcommand{\len}{\operatorname{length}}

\DeclareMathOperator{\height}{ht}

\DeclareMathOperator{\supp}{supp}

\DeclareMathOperator{\conv}{convexhull}

\newcommand{\CC}{{\mathbb C}}

\newcommand{\ZZ}{{\mathbb Z}}
\newcommand{\NN}{{\mathbb N}}

\newcommand{\RR}{{\mathbb R}}

\newcommand{\FF}{{\mathbb F}}

\def\a{{\bf a}}

\def\x{{\bf x}}

\def\1{{\bf 1}}
\def\0{{\bf 0}}

\begin{document}
\title[Weakly Newton-nondegenerate binomial ideals]{Weakly Newton-nondegenerate binomial ideals}

\author{Takayuki Hibi}
\address{Department of Pure and Applied Mathematics, Graduate School of Information
Science and Technology, The University of Osaka, Suita, Osaka 565-0871, Japan}
\email{hibi@math.sci.osaka-u.ac.jp}

\author{Vinh Anh Ph{\d{A}}m}
\address{Tulane University, Department of Mathematics, 6823 St. Charles Ave., New Orleans, LA 70118, USA}
\email{vpham1@tulane.edu}

\subjclass{13H15, 13H05}

\keywords{Multiplicity, Newton polyhedron, polynomial ring, Newton non-degenerate ideal, binomial ideal}

\begin{abstract}
Following \cite{HNP}, an ideal $I$ of a polynomial ring $R=k[x_1,\dots,x_n]$ is called \emph{weakly
Newton-nondegenerate}, or weakly NND, if its integral closure $\overline{I}$ is a monomial ideal.
We study weak Newton nondegeneracy for the family of quadratic binomial ideals
\[
I=(x_1^2+\epsilon_1x_{a_1}x_{b_1},\ \dots,\ x_n^2+\epsilon_nx_{a_n}x_{b_n}),\qquad \epsilon_i\in\{\pm1\},\ a_i\neq b_i,
\]
over an algebraically closed field. We prove that $I$ is weakly NND if and only if $\overline I=\mfrak^2$, if and only if the given generators form a regular sequence, and if and only if an explicit combinatorial condition on the pair (support pattern, sign pattern) holds: no nonempty subset $S\subseteq\{1,\dots,n\}$ is simultaneously \emph{closed} for the support data and \emph{sign-trivial} for the associated lattice of relations. The last equivalence rests on a solvability criterion for systems of monomial equations over a divisible abelian group, in the spirit of Eisenbud and Sturmfels. As an application we classify all such ideals for $n=3$.
\end{abstract}

\maketitle

\section{Introduction} \label{sec.intro}

Newton polyhedra provide a bridge between the algebra of an ideal and the combinatorics of its exponents. The bridge is at its strongest for \emph{nondegenerate} objects, where invariants that would otherwise be analytic or homological become computable from a polytope. This principle goes back to Kouchnirenko's computation of the Milnor number of a nondegenerate singularity in terms of the Newton number \cite{Kou76}, and it has been developed for ideals by Bivi\`a-Ausina, Fukui and Saia \cite{BAFS02}, among others.

Throughout, $R=k[x_1,\dots,x_n]$ is a polynomial ring over a field $k$ and $\mfrak=(x_1,\dots,x_n)$.

\begin{defn}[{\cite[Remark 4.15]{HNP}}]\label{def:NND}
An ideal $I\subseteq R$ is \emph{weakly Newton-nondegenerate}, abbreviated \emph{weakly NND}, if its integral closure $\bI$ is a monomial ideal.
\end{defn}

The qualifier \enquote{weakly} marks a genuine distinction. In a regular local ring, or in a ring of formal power series or of convergent power series, an ideal is Newton-nondegenerate if and only if its integral closure is monomial \cite[Theorem A]{HNP}, \cite[Theorem 2.3]{BAFS02}, \cite{Sai96}. In a polynomial ring the two conditions genuinely differ, and \cite[Remark 4.15]{HNP} reserves \enquote{weakly NND} for the integral-closure condition, which is the one we study here.

The condition in Definition \ref{def:NND} is a strong one: it says that the passage to the integral closure destroys all the non-monomial information in $I$. Consequently $\bI$, and with it every invariant of $I$ that only depends on $\bI$, such as the multiplicity, the analytic spread, or the asymptotic Samuel function, is computed by the Newton polyhedron $\Gamma_+(I)$. It is therefore natural to ask, for a concrete family of non-monomial ideals, exactly which members are weakly NND. The purpose of this paper is to answer that question for the following family, which is small enough to be classified completely and rich enough that the answer is not obvious.

\begin{setup}\label{setup}
Let $k$ be an algebraically closed field, $R=k[x_1,\dots,x_n]$ with $n\geq2$, and let
\[
I=(g_1,\dots,g_n),\qquad g_i=x_i^2+\epsilon_i x_{a_i}x_{b_i},
\]
where $\epsilon_i\in\{\pm1\}$ and $a_i,b_i\in\{1,\dots,n\}$ with $a_i\neq b_i$, for each $i=1,\dots,n$. We call $(m_1,\dots,m_n)$, $m_i:=x_{a_i}x_{b_i}$, the \emph{support pattern} and $(\epsilon_1,\dots,\epsilon_n)$ the \emph{sign pattern} of $I$. We allow $i\in\{a_i,b_i\}$, so that $x_i^2\pm x_ix_j$ is a permitted generator.
\end{setup}

Computation with \texttt{Macaulay2} \cite{M2} for $n\leq3$ suggests some patterns immediately. For
example, if all of the signs are negative, then $I$ is never weakly NND; this is correct, and it holds for every $n$ (Corollary \ref{cor:allminus}). Our main theorem, which captures completely the combinatorial condition governing weak Newton nondegeneracy, is the following.

\begin{alphthm}\label{thm:main-intro}
In the situation of Setup \ref{setup}, let $e_1,\ldots,e_n$ be the standard basis vectors of $\ZZ^n$ and write $w_i=2e_i-e_{a_i}-e_{b_i}\in\ZZ^n$. 
The following are equivalent:

\begin{enumerate}[(1)]
\item $I$ is weakly NND;
\item $\bI=\mfrak^2$;
\item $g_1,\dots,g_n$ is a regular sequence;
\item there is no nonempty $S\subseteq\{1,\dots,n\}$ which is \emph{closed}, i.e.
\[
i\in S\Rightarrow a_i,b_i\in S,\qquad i\notin S\Rightarrow\{a_i,b_i\}\not\subseteq S,
\]
and \emph{sign-trivial}, i.e. $\prod_{i\in S}(-\epsilon_i)^{\lambda_i}=1$ for every $\lambda = (\lambda_i)_{i \in S}$ in the relation lattice 
\[
L_S=\{\lambda=(\lambda_i)_{i \in S}\in\ZZ^S:\sum_{i\in S}\lambda_iw_i=0\}.
\]
\end{enumerate}
\end{alphthm}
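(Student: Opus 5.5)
We plan to prove the equivalences in the cycle (2)$\Rightarrow$(1)$\Rightarrow$(3)$\Rightarrow$(2), and then prove (3)$\Leftrightarrow$(4) separately.

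\textbf{The ring-theoretic equivalences.} The implication (2)$\Rightarrow$(1) is immediate, since $\mfrak^2$ is a monomial ideal.

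For (3)$\Rightarrow$(2), assume the $g_i$ form a regular sequence. Because the $g_i$ are homogeneous of degree two, $I$ is $\mfrak$-primary. Its multiplicity is $e(I)=2^n=e(\mfrak^2)$. Moreover $I\subseteq\mfrak^2$, and the degree-two forms $g_i$ form a system of parameters of $k[x]$; hence $I$ is a reduction of $\mfrak^2$. This follows either from Rees' theorem (equal multiplicities and $\mfrak$-primary) or directly from the fact that $R$ is finite over $k[g_1,\dots,g_n]$. Consequently $\bI=\overline{\mfrak^2}=\mfrak^2$.

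For (1)$\Rightarrow$(3), argue by contraposition. Suppose $g_1,\dots,g_n$ is not a regular sequence. Since the $g_i$ are homogeneous and $n$ in number, this means $\sqrt I\neq\mfrak$. So there is a nonzero point $p\in k^n$ with $g_i(p)=0$ for all $i$, and the whole line $kp$ lies in $V(I)$. Now suppose $\bI$ were a monomial ideal. Then $\sqrt{\bI}=\sqrt I$ would be a monomial radical ideal, so $V(I)$ would be a union of coordinate subspaces. We will derive a contradiction as follows. Let $T=\supp(p)$. The coordinate subspace $k^T$ lies in $V(I)$, so each $g_i$ vanishes on $k^T$. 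For $i\in T$ this forces $x_i^2+\epsilon_ix_{a_i}x_{b_i}$ to vanish on $k^T$, which is impossible because $x_i^2$ cannot cancel against $x_{a_i}x_{b_i}$ when $a_i\ne b_i$. Hence $T=\emptyset$, contradicting $p\neq0$. Therefore (1)$\Rightarrow$(3).

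\textbf{The combinatorial equivalence (3)$\Leftrightarrow$(4).} We must show that a nonzero common zero $p$ exists if and only if some nonempty $S$ is both closed and sign-trivial.

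Given a nonzero zero $p$, let $S=\supp(p)$. We check that $S$ is closed. If $i\in S$, then $p_i^2=-\epsilon_ip_{a_i}p_{b_i}\neq0$, so $a_i,b_i\in S$. If $i\notin S$, then $p_{a_i}p_{b_i}=0$, so $\{a_i,b_i\}\not\subseteq S$. Next, on $S$ the point $p$ solves the monomial equations
\[
x^{w_i}=-\epsilon_i\quad(i\in S)
\]
in the torus $(k^*)^S$. Raising these equations to integer powers and multiplying them along any $\lambda\in L_S$ yields $\prod_{i\in S}(-\epsilon_i)^{\lambda_i}=1$, so $S$ is sign-trivial.

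Conversely, let $S$ be closed and sign-trivial. We apply a solvability criterion: over the divisible group $k^*$, a system $x^{w_i}=c_i$ ($i\in S$) has a solution if and only if the character $\lambda\mapsto\prod c_i^{\lambda_i}$ is trivial on $L_S$. The proof of the criterion runs as follows. The subgroup $M\subseteq\ZZ^S$ spanned by the $w_i$ is free. The map $\ZZ^S\to M$ sending $e_i\mapsto w_i$ has kernel $L_S$, so the prescribed values define a homomorphism $M\to k^*$. Since $k^*$ is divisible, hence injective, this homomorphism extends to a homomorphism $\ZZ^S\to k^*$, which is exactly a point of $(k^*)^S$ solving the system. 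This is in the spirit of Eisenbud--Sturmfels.

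Taking a solution $q\in(k^*)^S$ and extending it by zero outside $S$ gives the desired point. For $i\in S$ we have $g_i=0$ by construction, while for $i\notin S$ closedness gives $p_{a_i}p_{b_i}=0$ together with $p_i=0$, so $g_i=0$ there as well. One subtlety must be handled: the $w_i$ live in $\ZZ^n$ but, by closedness, are supported on $S$, so we may regard them as elements of $\ZZ^S$.

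\textbf{Main obstacle.} We expect the main obstacle to be the step (1)$\Rightarrow$(3), specifically the claim that a monomial integral closure forces $V(I)$ to be a union of coordinate subspaces. This rests on $\sqrt{\bI}=\sqrt I$ and on the fact that the radical of a monomial ideal is monomial, both of which are standard. The remaining care is in the sign/torsion bookkeeping, in particular when $\mathrm{char}\,k=2$, where $-\epsilon_i=1$ and every $S$ is sign-trivial; the argument above handles this case uniformly.
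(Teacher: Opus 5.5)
Your proposal is correct and is essentially the paper's proof: (3)$\Leftrightarrow$(4) is handled exactly as in the paper, by stratifying $V(I)$ according to the support $S$ of a point and then solving $q^{w_i}=-\epsilon_i$ on $(k^\times)^S$ using injectivity of the divisible group $k^\times$. Likewise (3)$\Rightarrow$(2) is the same multiplicity argument; if you go through Rees's theorem you must localize at $\mfrak$ and contract back, as the paper does, whereas your finiteness of $R$ over $k[g_1,\dots,g_n]$ (taking degree components of an integral equation to put the coefficients in $I^j$) avoids that step. The one small variation is that you reach (3) from (1) geometrically, via $V(\bI)$ being a union of coordinate subspaces. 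The paper instead proves (1)$\Rightarrow$(2) directly: a monomial $\bI$ contains every $x_i^2$, hence every $x_ix_j$ by $(x_ix_j)^2-x_i^2x_j^2=0$, so that implication needs no multiplicity theory.
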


Theorem \ref{thm:main-intro} is proved in Section \ref{sec:torus}. The equivalence of (1), (2) and (3) follows from Rees's multiplicity theorem, once one observes that $\mfrak^2$ is the only integrally closed monomial ideal that $\bI$ could possibly be; this uses the material assembled in Sections \ref{sec:prelim} and \ref{sec:CI}. The substance of the theorem lies in the equivalence of (3) and (4), which converts a question about the vanishing locus $V(I)$ into a question about characters on a lattice: we stratify $V(I)$ according to the support of a point and appeal, on each stratum, to a solvability criterion for systems $p^{w_i}=c_i$ over a divisible abelian group (Lemma \ref{lem:divisible}), which puts an observation of Eisenbud and Sturmfels \cite[\S2]{ES96} into the biconditional form we require; it is applied to the individual torus strata of $V(I)$ in Lemma \ref{lem:stratify}.

Condition (4) is a finite, purely combinatorial test on the pair (support pattern, sign pattern). Section \ref{sec:n3} carries it out completely for $n=3$, where the support patterns fall into exactly three cases; the resulting classification (Corollary \ref{cor:n3}) is:
\begin{itemize}
\item if $m_1,m_2,m_3$ are pairwise distinct, then $I$ is weakly NND if and only if $\epsilon_1\epsilon_2\epsilon_3=+1$;
\item if some monomial repeats, then $I$ is weakly NND if and only if $\epsilon_i\epsilon_j=-1$, where $\{i,j\}$ is an explicitly determined \emph{critical pair} of positions, the remaining sign being free.
\end{itemize}
Section \ref{sec:n4} shows, with two ideals for $n=4$, that the quantifier over $S$ in condition (4) is genuinely needed, and closes with a question on its complexity and on generators of higher degree.

\section{Preliminaries}\label{sec:prelim}

Recall that $r\in R$ is \emph{integral} over an ideal $I$ if it satisfies an equation
\begin{equation}\label{eq:integraldep}
r^d+c_1r^{d-1}+\dots+c_d=0,\qquad c_j\in I^{j},
\end{equation}
and that the set $\bI$ of such elements is an ideal, the \emph{integral closure} of $I$, which is itself integrally closed \cite[Chapter 1]{SH06}.

Following \cite[Definition 2.8]{HNP}, for an ideal $I\subseteq R$ we write $$\supp(I)=\bigcup_{f\in I}\supp(f)\subseteq\NN^n$$ for the union of the exponent supports of the elements of $I$, and
\[
\Gamma_+(I)=\conv\bigl\{u+v:\ u\in\supp(I),\ v\in\RR^n_{\geq0}\bigr\}\subseteq\RR^n_{\geq0}
\]
for the \emph{Newton polyhedron} of $I$. We write $\Gamma_x$ for the same construction in a general regular local ring $A$ with regular system of parameters $x$; when $A=R$ and $x=x_1,\dots,x_n$ this is exactly $\Gamma_+$ above.

Let $(A,\mathfrak n)$ be a regular local ring with regular system of parameters $x=x_1,\dots,x_d$. For a closed convex $\Delta\subseteq\RR^d_{\geq0}$ let $C(\Delta)$ be the cone over $\Delta$ with vertex the origin, and let $$A_\Delta=\{g\in A:\supp(g)\subseteq C(\Delta)\cap\ZZ^d_{\geq0}\}$$ with maximal ideal $\mathfrak n_\Delta$. With $g = \sum\limits_{\a \in \supp(g)} c_\a\x^\a\in A,$
set $g_\Delta = \sum_{\a \in \supp(g) \cap C(\Delta)} c_\a \x^\a.$ An ideal $J\subseteq A$ is \emph{Newton-nondegenerate} (NND) if it has generators $g_1,\dots,g_s$ such that $J_\Delta=(g_{1\Delta},\dots,g_{s\Delta})$ is $\mathfrak n_\Delta$-primary in $A_\Delta$ for every compact face $\Delta$ of $\Gamma_x(J)$ \cite[Definition 3.1]{HNP}. Over $\CC[[x_1,\ldots,x_d]]$ this says exactly that for each compact face $\Delta_1$ of each compact face $\Delta$ the truncated system $g_{1\Delta_1}=\dots=g_{s\Delta_1}=0$ has no solution in $(\CC^\times)^d$ \cite[Remark 3.2]{HNP}, \cite[Theorem 6.2]{Kou76} and \cite{Sai96,BAFS02}. By \cite[Theorem A]{HNP}, in a regular local ring
\[
J\ \text{is NND}\iff\overline J\ \text{is a monomial ideal in }x.
\]

In a polynomial ring, the two properties are no longer equivalent, which is the reason for the terminology of Definition \ref{def:NND}: for example, by \cite[Remark 3.5]{HNP}, the ideal $I=(x^5+xy^3,\,y^5+x^3y)\subseteq\CC[x,y]$ satisfies the face condition and $\overline{I\CC[[x,y]]}$ is monomial, while $\bI$ is not a monomial ideal in $\CC[x,y]$. Only one implication survives.

\begin{prop}\label{prop:completion}
Let $I\subseteq R=k[x_1,\dots,x_n]$ be an ideal. If $I$ is weakly NND in $R$, then $I\hat R$ is NND in $\hat R=k[[x_1,\dots,x_n]]$, and $IR_\mfrak$ is NND in $R_\mfrak$.
\end{prop}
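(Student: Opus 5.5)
The plan is to use the characterization \cite[Theorem A]{HNP}: in a regular local ring $(A,\mathfrak n)$ with regular system of parameters $x$, an ideal $J$ is NND if and only if $\overline J$ is a monomial ideal in $x$. So it suffices to show that if $\bI$ is a monomial ideal of $R$, then $\overline{I\hat R}$ and $\overline{IR_\mfrak}$ are monomial ideals, generated by the same monomials. The natural candidate is $\overline{I\hat R}=\bI\hat R$ and $\overline{IR_\mfrak}=\bI R_\mfrak$; both are extensions of a monomial ideal and hence are monomial.

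For the localization I will use that integral closure commutes with localization \cite[Chapter 1]{SH06}: $\overline{IR_\mfrak}=\bI R_\mfrak$. Since $\bI$ is generated by monomials, $\bI R_\mfrak$ is generated by the same monomials in the regular system of parameters $x_1,\dots,x_n$ of $R_\mfrak$. Thus $IR_\mfrak$ is NND by \cite[Theorem A]{HNP}.

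For the completion, $R_\mfrak$ is excellent, so $\hat R$ is its $\mfrak$-adic completion with reduced (normal) fibres. By \cite{SH06} (analytically unramified / normal completion), for an ideal $J$ of an excellent local ring $A$ one has $\overline{J\hat A}=\overline J\hat A$. Applying this with $A=R_\mfrak$ and $J=IR_\mfrak$ gives $\overline{I\hat R}=\overline{IR_\mfrak}\hat R=\bI\hat R$, which is again generated by monomials in $x$, so $I\hat R$ is NND by \cite[Theorem A]{HNP}.

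The main obstacle is the completion step: justifying $\overline{J\hat A}=\overline J\hat A$. If citing the excellent-ring result is unsatisfactory, there is a more elementary route. First, $\overline J\hat A\subseteq\overline{J\hat A}$ is clear from equations \eqref{eq:integraldep}. Second, $\overline J\hat A$ is integrally closed because it is a monomial ideal in the regular local ring $\hat R=k[[x]]$; this follows from the valuative/Newton-polyhedron description of integrally closed monomial ideals. Finally, $\overline J\hat A\supseteq J\hat A$, and $\overline{J\hat A}$ is the smallest integrally closed ideal containing $J\hat A$, so equality follows.

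If $I$ is not $\mfrak$-primary, the monomial-integral-closure fact in the power series ring still holds, so no primary hypothesis is needed. One should also check that \cite[Theorem A]{HNP} applies to $R_\mfrak$ and $\hat R$ with parameters $x_1,\dots,x_n$. It does: both are regular local rings with this regular system of parameters.
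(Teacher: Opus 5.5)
Your main argument is correct and is essentially the paper's: integral closure commutes with localization, and, since $R$ (equivalently $R_\mfrak$) is excellent, the completion map is flat with geometrically regular fibres, so integral closure also commutes with completion \cite[Corollary 19.5.2]{SH06}; hence $\overline{IR_\mfrak}=\bI R_\mfrak$ and $\overline{I\hat R}=\bI\hat R$ are monomial, and \cite[Theorem A]{HNP} applies. In your fallback route, being a monomial ideal does not by itself make $\bI\hat R$ integrally closed (e.g.\ $(x_1^2,x_2^2)$ is not); you need that $\bI$ is an \emph{integrally closed} monomial ideal, together with the Newton-polyhedron description of integral closures of monomial ideals in $k[[x_1,\dots,x_n]]$, which itself requires an argument in the power series setting.
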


\begin{proof}
The $\mfrak$-adic completion map $R\to\hat R$ is flat. Since $R$ is excellent, its formal fibers are geometrically regular, so this map is a normal homomorphism in the sense of \cite[Definition 19.0.1]{SH06}; by \cite[Corollary 19.5.2]{SH06}, $\bI\hat R=\overline{I\hat R}$. If $\bI$ is generated by monomials, then so is $\overline{I\hat R}$, and $I\hat R$ is NND by \cite[Theorem A]{HNP}. The same argument applies to $R\to R_\mfrak$, using \cite[Proposition 1.1.4]{SH06}.
\end{proof}

For an $\mfrak$-primary ideal $J$ of a Noetherian local ring $(A,\mfrak)$ of dimension $d$, $e(J)$ denotes the Hilbert--Samuel multiplicity. We use two standard facts \cite[Ch.~11]{SH06} and \cite[\S4.6--4.7]{BH98}: if $A$ is Cohen--Macaulay and $J$ is generated by a system of parameters, then $e(J)=\len(A/J)$; and $e(\mfrak^t)=t^d e(\mfrak)$. We use Rees's theorem as a key input in determining the integral closure of the ideals of Setup \ref{setup}.

\begin{thm}[Rees \cite{Rees61}; see \textup{\cite[\S11.3]{SH06}}]\label{thm:rees}
Let $(A,\mfrak)$ be a formally equidimensional Noetherian local ring and $J\subseteq I$ ideals with $J$ $\mfrak$-primary. Then $\overline J=\overline I$ if and only if $e(J)=e(I)$.
\end{thm}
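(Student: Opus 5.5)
We would prove the two implications separately. The forward one is formal. The converse carries the content and is where formal equidimensionality is used.

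\emph{($\Rightarrow$).} Suppose $\overline J=\overline I$.
\begin{enumerate}[(i)]
\item Since $J\subseteq I\subseteq \overline J$ and $A$ is Noetherian, $I$ is integral over $J$. Hence $J$ is a reduction of $I$: there is an $r$ with $J I^{n}=I^{n+1}$ for all $n\ge r$.
\item Iterating gives $I^{n+r}=J^nI^r\subseteq J^n$, so $\len(A/J^n)\le \len(A/I^{n+r})$.
\item Comparing leading coefficients of the Hilbert--Samuel polynomials gives $e(J)\le e(I)$.
\item The inclusion $J\subseteq I$ gives $\len(A/I^n)\le\len(A/J^n)$, hence $e(I)\le e(J)$.
\end{enumerate}
This direction needs no hypothesis on $A$.

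\emph{($\Leftarrow$), reductions.} Suppose $e(J)=e(I)$. We first make three standard reductions.
\begin{enumerate}[(i)]
\item Pass to $A(t)=A[t]_{\mfrak A[t]}$ to make the residue field infinite. This is faithfully flat with regular fibres, so it preserves lengths, multiplicities, integral closures (as in the proof of Proposition \ref{prop:completion}) and formal equidimensionality.
\item Pass to the completion $\hat A$. This is again faithfully flat, and membership in $\overline J$ descends by \cite[\S1.6]{SH06}.
\item Reduce to domains. By formal equidimensionality every minimal prime $P$ of $\hat A$ has $\dim \hat A/P=d$. The associativity formula then gives
\[
e(J)=\sum_{P}\len(\hat A_P)\,e(J\hat A/P),
\]
and the same for $I$, the sum running over the minimal primes $P$ of $\hat A$.
\end{enumerate}
In (iii), each summand satisfies $e(J\hat A/P)\ge e(I\hat A/P)$ because $J\subseteq I$. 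So equality of the totals forces equality in every summand. Since $r\in\overline J$ if and only if its image lies in $\overline{J(\hat A/P)}$ for every minimal prime $P$, we may assume $A$ is a complete local domain with infinite residue field.

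\emph{($\Leftarrow$), the domain case.} Here we argue by contradiction. Take $x\in I$ with $x\notin\overline J$, and set $K=J+(x)$. Since $J\subseteq K\subseteq I$, the sandwich gives $e(J)=e(K)=e(I)$, so
\[
\len(K^n/J^n)=\len(A/J^n)-\len(A/K^n)=o(n^d).
\]
We will contradict this by showing that $\len(K^n/J^n)$ has a positive $n^d$-coefficient.

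The plan is to work in the Rees algebra $\mathcal R=A[Jt]$, a Noetherian domain of dimension $d+1$. Since $x$ is not integral over $J$, the element $xt$ is not integral over $\mathcal R$. By the valuative criterion there is a Rees valuation $v$ of $J$ with $v(x)<v(J)$. This $v$ corresponds to a height-one prime $Q$ of the normalization $\overline{\mathcal R}$ lying over $\mfrak$, that is, to a component of the exceptional divisor of the normalized blowup of $J$, which has dimension $d-1$.

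We then exhibit, inside $K^n/J^n$, a subquotient of length about $c\,n^d$ with $c>0$. We would take the elements $x^{j}J^{n-j}$ for $j$ up to a fixed small fraction of $n$. Measured by $v$, these lie outside $\overline{J^n}$, and the number of independent such elements is governed by the Hilbert function of the $(d-1)$-dimensional fibre $\overline{\mathcal R}/Q$ over $A/\mfrak$. To turn this into a clean length estimate, we would compare against $\overline{J^n}$, whose Hilbert function also has leading coefficient $e(J)$ because $\overline{\mathcal R}$ is finite over $\mathcal R$ for complete $A$. The growth of $\len(K^n/\overline{J^n}\cap K^n)$ would then come from the positive-degree part of $\overline{\mathcal R}[xt]/\overline{\mathcal R}$ localized at $Q$.

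This last step is the main obstacle: producing a genuinely $d$-dimensional contribution from a single Rees valuation. The domain reduction is exactly what guarantees that the relevant component has full dimension and is not lost to a lower-dimensional minimal prime.
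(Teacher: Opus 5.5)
The paper does not prove this statement; it quotes it from Rees and Swanson--Huneke, so your proposal has to stand on its own.

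\textbf{What is correct.} Your forward direction is fine. So are the three reductions: to $A(t)$, to $\hat A$, and, via the associativity formula, to a complete local domain. The set-up of the domain case is also right: with $x\in I\setminus\overline J$ and $K=J+xA$, you correctly get $\len(K^n/J^n)=o(n^d)$.

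\textbf{The gap.} The whole content of Rees's theorem is the reverse estimate $\len(K^n/J^n)\ge c\,n^d$ with $c>0$. That is exactly the step you leave open, as you say yourself, so the proposal has a genuine gap. The mechanism you suggest would also not work as stated. Localizing $\overline{\mathcal R}[xt]/\overline{\mathcal R}$ at the height-one prime $Q$ leaves a module over the DVR $\overline{\mathcal R}_Q$. That module carries none of the graded length information from which an $n^d$ must come. The comparison with $\overline{J^n}$ is also unnecessary, since $J^n$ already lies in $\{a: v(a)\ge n\,v(J)\}$.

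\textbf{How to close it.} Your heuristic with the elements $x^jJ^{n-j}$ is the right one. It becomes rigorous if you filter by $v$ instead of localizing.
\begin{enumerate}[(i)]
\item Normalize $v$ to have value group $\ZZ$. Put $W_\mu=\{a\in A: v(a)\ge\mu\}$, $\delta=v(J)-v(x)>0$ and $\kappa=A/\mfrak$.
\item Since $J^n\subseteq W_{nv(J)}$, you get $\len(K^n/J^n)\ge\len\bigl(K^n/(K^n\cap W_{nv(J)})\bigr)$.
\item Filter the right-hand module by the $W_\mu$ with $0\le\mu<nv(J)$. The subquotients are $\kappa$-vector spaces.
\item At the distinct levels $\mu_j=nv(J)-j\delta$, $1\le j\le n$, multiplication by $x^j$ embeds $J^{n-j}/\{a\in J^{n-j}: v(a)>(n-j)v(J)\}$, because $v(x^ja)=jv(x)+v(a)$. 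Hence
\[
\len(K^n/J^n)\ \ge\ \sum_{m=0}^{n-1}h(m),\qquad h(m)=\dim_\kappa J^m/\{a\in J^m: v(a)>m\,v(J)\}.
\]
\item Here $h$ is the Hilbert function of the standard graded $\kappa$-domain $\mathcal R/\mathfrak P$, where $\mathfrak P=Q\cap\mathcal R$. Note that $\mathfrak P_0=\mfrak$.
\end{enumerate}

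\textbf{The dimension count you still need.} You must show $\dim\mathcal R/\mathfrak P=d$; this is where the hypothesis is used a second time. Because $A$ is complete, it is universally catenary and $\overline{\mathcal R}$ is finite over $\mathcal R$. The dimension formula for $A\subseteq\overline{\mathcal R}$ at $Q$ reads $\height Q+\operatorname{trdeg}_\kappa\kappa(Q)=d+1$. With $\height Q=1$ this gives $\operatorname{trdeg}_\kappa\kappa(Q)=d$, and hence $\operatorname{trdeg}_\kappa\kappa(\mathfrak P)=d$. So $h(m)$ is eventually a polynomial of degree $d-1$ with positive leading coefficient. Then $\sum_{m<n}h(m)\ge c'n^d$ with $c'>0$, which contradicts $e(K)=e(J)$.

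Your closing remark is therefore only half right. Completeness, through universal catenarity, is what makes the exceptional component $(d-1)$-dimensional; the domain property alone would not.
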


We fix the notation of Setup \ref{setup} for the rest of the paper. Let $e_1,\ldots,e_n$ be the standard basis vectors of $\ZZ^n$. For $i=1,\dots,n$ put
\[
w_i:=2e_i-e_{a_i}-e_{b_i}\in\ZZ^n .
\]
 For $p\in(k^\times)^n$ and $w\in\ZZ^n$ we write $p^w=\prod_kp_k^{w_k}$, so that $p^{w_i}=p_i^2/(p_{a_i}p_{b_i})$.

For a subset $S\subseteq\{1,\dots,n\}$ satisfying the closure condition of Theorem \ref{thm:main-intro}(4) we regard $w_i\in\ZZ^S$ for $i\in S$, which is legitimate since then $a_i,b_i\in S$, and we let
\[
M_S\colon\ZZ^S\to\ZZ^S,\qquad M_S(e_i)=w_i,
\]
be the homomorphism whose \emph{columns} are the $w_i$, we set $$L_S:=\ker M_S=\{\lambda=(\lambda_i)_{i \in S} \in\ZZ^S:\sum_{i\in S}\lambda_iw_i=0\},$$ and we define the \emph{sign character}
\[
\sigma_S\colon L_S\to\{\pm1\},\qquad \sigma_S(\lambda)=\prod_{i\in S}(-\epsilon_i)^{\lambda_i},
\]
which is a group homomorphism. The second condition of Theorem \ref{thm:main-intro}(4) reads $\sigma_S\equiv1$. We call $S$ \emph{closed} if the first condition holds and \emph{sign-trivial} if $\sigma_S\equiv1$, and we call $S$ a \emph{witness} if it is nonempty, closed and sign-trivial. Thus Theorem \ref{thm:main-intro}(4) says: \emph{$I$ has no witness}.

\section{Regular sequences and the zero locus}\label{sec:CI}

This section provides the geometric reformulation of the regular sequence condition that will be used in the proof of Theorem \ref{thm:main-intro}.

\begin{lem}\label{lem:regseq}
Let $g_1,\dots,g_n\in\mfrak$ be homogeneous of positive degree. Then $g_1,\dots,g_n$ is a regular sequence in $R$ if and only if $V(g_1,\dots,g_n)=\{0\}$ in $k^n$.
\end{lem}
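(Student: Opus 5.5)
The plan is to pass through the dimension of $R/(g_1,\dots,g_n)$. Put $J=(g_1,\dots,g_n)$; it is a homogeneous ideal contained in $\mfrak$. The key input is the standard graded fact that a sequence of homogeneous elements of positive degree in $R$ is a regular sequence if and only if it is part of a system of parameters. Since $R$ is Cohen--Macaulay and, in the graded setting, regularity and dimension may be tested after localizing at $\mfrak$, this reduces to showing $\dim R/J=0$. (Equivalently, by the graded version of the depth/height criterion, homogeneous elements form a regular sequence if and only if $\height(g_1,\dots,g_i)=i$ for all $i$, and for $i=n$ this is $\dim R/J=0$.)

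For the implication ($\Rightarrow$): if $g_1,\dots,g_n$ is regular, then $\height J=n$, and $J$ is homogeneous, so its only minimal prime is $\mfrak$. Hence $J$ is $\mfrak$-primary, and the Nullstellensatz gives $V(J)=\{0\}$. Here we use that $k$ is algebraically closed, as in Setup \ref{setup}; this is where closure of $k$ matters, since $V$ is taken in $k^n$.

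For the implication ($\Leftarrow$): if $V(J)=\{0\}$, then $\sqrt J=\mfrak$ by the Nullstellensatz, so $\dim R/J=0$ and $\dim R_\mfrak/JR_\mfrak=0$. Therefore $g_1,\dots,g_n$ is a system of parameters of the $n$-dimensional regular, hence Cohen--Macaulay, local ring $R_\mfrak$, and so it is an $R_\mfrak$-regular sequence. To return to $R$, use homogeneity. Let $Z$ be the set of zero divisors on $R/(g_1,\dots,g_{i-1})$. It is the union of the associated primes, which are homogeneous and therefore contained in $\mfrak$. So if $g_i$ were a zero divisor in $R/(g_1,\dots,g_{i-1})$, it would remain one after localizing at $\mfrak$. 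Moreover $R/J\neq 0$, since $J\subseteq\mfrak$. Hence the sequence is regular in $R$, and in any order.

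The main obstacle is only this final bookkeeping: passing from the local ring $R_\mfrak$ back to the graded ring $R$. It is handled by the fact that the associated primes of graded modules are homogeneous, so every one of them lies in $\mfrak$. Alternatively, one can cite the graded analogue in \cite[\S1.5]{BH98} directly.
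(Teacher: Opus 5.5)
Your proof is correct and follows essentially the paper's route: the Nullstellensatz gives $\sqrt J=\mfrak$, Cohen--Macaulayness of $R_\mfrak$ makes the $g_i$ a regular sequence there, and homogeneity (associated primes lie in $\mfrak$) carries this back to $R$; for ($\Rightarrow$) you argue through homogeneous minimal primes of height $n$ where the paper uses that $V(J)$ is a finite nonempty cone, and both variants are valid. One correction to your commentary: algebraic closedness is not needed in ($\Rightarrow$), since $V(J)=V(\sqrt J)=V(\mfrak)=\{0\}$ over any field, whereas it is genuinely used in ($\Leftarrow$), where you correctly invoke the Nullstellensatz.
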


\begin{proof}
 ($\Rightarrow$) Let $g_1,\dots,g_n\in\mfrak$ be a regular sequence of homogeneous polynomials in $R$. Since $R$ is Cohen--Macaulay, $\dim R/(g_1,\ldots,g_n)=\dim R - n=0$, so $V(g_1,\dots,g_n)$ is finite and nonempty. Being the zero locus of homogeneous polynomials, it is a cone; hence, it equals $\{0\}$.

($\Leftarrow$) Let $J=(g_1,\ldots,g_n)$. By the Nullstellensatz, $V(J)=\{0\}$ gives $\sqrt J=\mfrak$, so $\height (J)=n$. In a Cohen--Macaulay local ring, elements generating an ideal of height equal to their number automatically form a regular sequence (see \cite[Theorem 2.1.2]{BH98}). Since $R_\mfrak$ is Cohen--Macaulay, $g_1,\dots,g_n$ is a regular sequence in $R_\mfrak$; since $g_i$ are homogeneous, this remains a regular sequence in $R$.
\end{proof}

\begin{exmp}\label{ex:cyclic}
Assume $\operatorname{char}k\ne2$. For $I=(x^2+yz,\,y^2+xz,\,z^2+xy)\subseteq k[x,y,z]$ one checks (e.g. by Corollary \ref{cor:n3} or directly) that the only common zero is the origin, i.e.\ $V(I)=\{0\}$. By Theorem \ref{thm:main-intro}, this gives $\bI=\mfrak^2=(x^2,xy,xz,y^2,yz,z^2)$, and indeed $e(IR_\mfrak)=2^3=8=e(\mfrak^2R_\mfrak)$.
\end{exmp}

\section{The torus condition}\label{sec:torus}

This section supplies the last ingredients for the proof of Theorem~\ref{thm:main-intro}: we analyze the zero locus $V(I)$ stratum by stratum according to the support of a point, and translate the existence of a nonzero point into the combinatorial condition (4). The two ingredients are Lemmas \ref{lem:divisible} and \ref{lem:stratify}.

\subsection{A solvability criterion over divisible groups and stratifying the zero locus}

Systems of monomial equations over a field behave like linear systems over $\ZZ$, except that the group $k^\times$ is not free; what replaces freeness is divisibility, i.e., injectivity as a $\ZZ$-module. The following lemma is the precise statement we need. It generalizes the observation, used by Eisenbud and Sturmfels in their study of binomial ideals \cite[\S2]{ES96}, that a system of monomial equations over an algebraically closed field is solvable as soon as it is consistent.

\begin{lem}\label{lem:divisible}
Let $A$ be a divisible abelian group, written multiplicatively, and let  
$w_1,\dots,w_N$ belong to $\ZZ^m$ with $N,m\geq1$. Write $M\colon\ZZ^N\to\ZZ^m$ for the
homomorphism determined by $M(e_i)=w_i$, and set $L=\ker M$. Then, for
$c=(c_1,\dots,c_N)\in A^N$, the system
\[
p^{w_i}=c_i\qquad(i=1,\dots,N)
\]
has a solution $p\in A^m$ if and only if $\prod_{i=1}^Nc_i^{\lambda_i}=1$ for every $\lambda\in L$.
\end{lem}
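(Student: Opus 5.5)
The proof is a translation into homomorphisms. A point $p\in A^m$ is the same as a group homomorphism $\phi_p\colon\ZZ^m\to A$ with $\phi_p(u)=p^u$. The tuple $c$ is the same as a homomorphism $\gamma\colon\ZZ^N\to A$ with $\gamma(e_i)=c_i$. In this language, the system $p^{w_i}=c_i$ says exactly that $\phi_p\circ M=\gamma$, because both sides are homomorphisms that agree on the basis $e_1,\dots,e_N$. So the lemma asks: when does $\gamma$ factor through $M\colon\ZZ^N\to\ZZ^m$?

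\emph{Necessity.} Suppose $p$ is a solution and $\lambda\in L$. Then
\[
\prod_i c_i^{\lambda_i}=\gamma(\lambda)=\phi_p(M\lambda)=\phi_p(0)=1 .
\]

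\emph{Sufficiency.} Assume $\gamma$ vanishes on $L=\ker M$. I would proceed in two steps.

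First, $\gamma$ descends to a homomorphism $\bar\gamma\colon M(\ZZ^N)\to A$ on the image $M(\ZZ^N)\subseteq\ZZ^m$. This uses the isomorphism $\ZZ^N/L\cong M(\ZZ^N)$, and we set $\bar\gamma(M\lambda)=\gamma(\lambda)$. This is well defined precisely because $\gamma|_L\equiv1$.

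Second, a divisible abelian group is an injective $\ZZ$-module, by Baer's criterion over the PID $\ZZ$. So $\bar\gamma$ extends along the inclusion $M(\ZZ^N)\hookrightarrow\ZZ^m$ to some $\phi\colon\ZZ^m\to A$. Put $p=(\phi(e_1),\dots,\phi(e_m))$. Then $\phi_p=\phi$ and $\phi_p\circ M=\gamma$, which is the required solution.

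The only nontrivial input is the extension step, that is, the injectivity of divisible groups. If one prefers to avoid quoting Baer's criterion, there is a hands-on argument via the Smith normal form. Write $M=UDV$ with $U\in GL_m(\ZZ)$, $V\in GL_N(\ZZ)$ and $D$ diagonal with entries $d_1,\dots,d_r$ followed by zeros. After the unimodular changes of variables $p\mapsto p^{U}$ and $c\mapsto c^{V^{-1}}$, the system becomes decoupled:
\[
q_j^{d_j}=c'_j\ (j\le r),\qquad 1=c'_j\ (j>r).
\]
The conditions $c'_j=1$ for $j>r$ are exactly the hypothesis applied to the basis of $L$ coming from the last columns of $V^{-1}$. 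The equations $q_j^{d_j}=c'_j$ are solvable by divisibility of $A$. I expect the only care needed is keeping track of the directions of these base changes, which is routine.
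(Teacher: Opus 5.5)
Your proposal is correct and follows essentially the same route as the paper. The paper also identifies $A^m$ with $\operatorname{Hom}(\ZZ^m,A)$ and proves necessity by evaluating at $\lambda\in L$. For sufficiency it likewise descends to $\ZZ^N/L\cong\im(M)$ and extends along $\im(M)\hookrightarrow\ZZ^m$ using injectivity of the divisible group $A$.

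Your Smith normal form alternative is not in the paper, but it is also valid: the last columns of $V^{-1}$ do form a basis of $L$.
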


\begin{proof}
We use the standard isomorphism
\[
A^m\xrightarrow{\ \sim\ }\operatorname{Hom}(\ZZ^m,A),\qquad p\mapsto\Bigl(f_p:\lambda\mapsto\textstyle\prod_kp_k^{\lambda_k}\Bigr),
\]
and similarly $A^N\cong\operatorname{Hom}(\ZZ^N,A)$. Let $\Phi\colon A^m\to A^N$, $\Phi(p)=(p^{w_1},\dots,p^{w_N})$. Since $f_p(w_i)=p^{w_i}$ and $M(e_i)=w_i$, under the two isomorphisms $\Phi$ becomes
\[
\operatorname{Hom}(\ZZ^m,A)\to\operatorname{Hom}(\ZZ^N,A),\qquad f\mapsto f\circ M .
\]
Thus the system is solvable exactly when $f_c\in\im(-\circ M)$.

($\Rightarrow$) If $f_c=f\circ M$ then for $\lambda\in L$ we get $\prod_ic_i^{\lambda_i}=f_c(\lambda)=f(M\lambda)=f(0)=1$.

($\Leftarrow$) Suppose $f_c|_L$ is trivial. Then $f_c$ factors as $f_c=\hat g\circ\pi$ for a unique $\hat g\colon\ZZ^N/L\to A$, where $\pi\colon\ZZ^N\twoheadrightarrow\ZZ^N/L$ is the quotient map. The first isomorphism theorem provides an isomorphism $\theta\colon\ZZ^N/L\xrightarrow{\sim}\im(M)$ with $\iota\circ\theta\circ\pi=M$, where $\iota\colon\im(M)\hookrightarrow\ZZ^m$ is the inclusion. Put $g:=\hat g\circ\theta^{-1}\colon\im(M)\to A$. Since $A$ is divisible it is an injective $\ZZ$-module, so $g$ extends along $\iota$ to some $f\colon\ZZ^m\to A$ with $f\circ\iota=g$. Then
\[
f\circ M=f\circ\iota\circ\theta\circ\pi=g\circ\theta\circ\pi=\hat g\circ\pi=f_c ,
\]
so $c=\Phi(p)$ for the tuple $p=(f(e_1),\dots,f(e_m))$.
\[
\begin{tikzcd}[column sep=large, row sep=large]
\ZZ^N
  \arrow[r, "\pi"]
  \arrow[rrr, bend left=30, "M"]
  \arrow[dr, swap, "f_c"]
& \ZZ^N/L
  \arrow[r, "\theta"', "\cong"]
  \arrow[d, "\hat g"]
& \im(M)
  \arrow[r, hook, "\iota"]
  \arrow[dl, "g"]
& \ZZ^m
  \arrow[dll, dashed, "f"]
\\
& A & &
\end{tikzcd}
\qedhere
\]
\end{proof}

Since $k$ is algebraically closed, $k^\times$ is divisible, and Lemma \ref{lem:divisible} applies with $A=k^\times$ in any characteristic.

\begin{lem}\label{lem:stratify}
Let $p\in k^n\setminus\{0\}$ and let $S=\{i:p_i\neq0\}$ be its support. Then $p\in V(I)$ if and only if $S$ is closed and $q:=p|_S\in(k^\times)^S$ satisfies
\[
q^{w_i}=-\epsilon_i\qquad\text{for all }i\in S .
\]
\end{lem}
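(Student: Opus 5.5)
The plan is a direct, coordinate-by-coordinate analysis of the equations $g_i(p)=0$, split according to whether $i\in S$ or $i\notin S$. For each $i$, evaluating gives $g_i(p)=p_i^2+\epsilon_i p_{a_i}p_{b_i}$. We then read off what vanishing means in each case, using only that $k$ is a field, so a product of nonzero elements is nonzero.

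First take $i\notin S$, so $p_i=0$. Then $g_i(p)=\epsilon_i p_{a_i}p_{b_i}$, which vanishes if and only if $p_{a_i}=0$ or $p_{b_i}=0$. This is exactly the condition $\{a_i,b_i\}\not\subseteq S$, the second half of closedness.

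Next take $i\in S$, so $p_i\neq0$. If $g_i(p)=0$ then $p_{a_i}p_{b_i}=-\epsilon_i p_i^2\neq0$, which forces $a_i,b_i\in S$; this is the first half of closedness. Given $a_i,b_i\in S$, we may divide by $q_{a_i}q_{b_i}$, and $g_i(p)=0$ becomes
\[
q_i^2/(q_{a_i}q_{b_i})=-\epsilon_i,\quad\text{i.e.}\quad q^{w_i}=-\epsilon_i ,
\]
where $w_i$ is regarded in $\ZZ^S$ as in Section \ref{sec:prelim}.

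Both directions follow by assembling these cases. Suppose $p\in V(I)$. The two cases give both halves of closedness together with the torus equations for $i\in S$. Conversely, suppose $S$ is closed and the torus equations hold. For $i\notin S$ the second half of closedness makes $g_i(p)=0$. For $i\in S$, multiplying $q^{w_i}=-\epsilon_i$ back by $q_{a_i}q_{b_i}$ gives $g_i(p)=0$.

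There is no real obstacle here. The only point needing care is the bookkeeping when $i\in\{a_i,b_i\}$. In that case $w_i=e_i-e_j$ with $j$ the other index, and the identity $p^{w_i}=p_i^2/(p_{a_i}p_{b_i})$ still holds verbatim, so no separate case is needed. The hypothesis $p\neq0$ is only used so that $S$ is nonempty, which is not actually needed for the equivalence itself.
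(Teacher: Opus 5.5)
Your proposal is correct and follows essentially the same route as the paper's proof: a case split on $i\in S$ versus $i\notin S$, using $p_{a_i}p_{b_i}=-\epsilon_ip_i^2\neq0$ to get $a_i,b_i\in S$ and $q^{w_i}=-\epsilon_i$, and $g_i(p)=\epsilon_ip_{a_i}p_{b_i}$ for $i\notin S$, with the converse obtained by reversing these computations. Your side remarks are also accurate: the case $i\in\{a_i,b_i\}$ needs no separate treatment, and the equivalence holds trivially for $p=0$.
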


\begin{proof}
($\Rightarrow$) Let $i\in S$. From $g_i(p)=0$ we get $p_{a_i}p_{b_i}=-\epsilon_i^{-1}p_i^2=-\epsilon_ip_i^2\neq0$, so $a_i,b_i\in S$ and, dividing, $q^{w_i}=p_i^2/(p_{a_i}p_{b_i})=-\epsilon_i$. Let $i\notin S$. Then $p_i=0$, so $0=g_i(p)=\epsilon_ip_{a_i}p_{b_i}$ forces $p_{a_i}p_{b_i}=0$, i.e. $\{a_i,b_i\}\not\subseteq S$.

($\Leftarrow$) Let $i\in S$. Closedness gives $a_i,b_i\in S$, and $q^{w_i}=-\epsilon_i$ gives $p_{a_i}p_{b_i}=-\epsilon_ip_i^2$, so
\[
g_i(p)=p_i^2+\epsilon_i(-\epsilon_ip_i^2)=(1-\epsilon_i^2)p_i^2=0 .
\]
Let $i\notin S$. Then at least one of $a_i,b_i$ lies outside $S$, so $p_{a_i}p_{b_i}=0$ and $g_i(p)=p_i^2+\epsilon_ip_{a_i}p_{b_i}=0$. Hence $p\in V(I)$.
\end{proof}

\begin{proof}[Proof of Theorem~\ref{thm:main-intro}]
\emph{(1)$\Rightarrow$(2).} Assume $\bI$ is a monomial ideal. Since $g_i\in\bI$ and $\bI$ is monomial, $x_i^2\in\bI$ for every $i$. For $i\ne j$ the element $r=x_ix_j$ satisfies the equation of integral dependence
\[
r^2+0\cdot r-x_i^2x_j^2=0,\qquad x_i^2x_j^2\in\bI^{\,2},
\]
so $r\in\overline{\bI}=\bI$. Hence $\mfrak^2\subseteq\bI$. Conversely $I\subseteq\mfrak^2$ and $\mfrak^2$ is integrally closed, so $\bI\subseteq\mfrak^2$. Therefore $\bI=\mfrak^2$.

\emph{(2)$\Rightarrow$(3).} If $\bI=\mfrak^2$ then $V(I)=V(\bI)=V(\mfrak^2)=\{0\}$, and Lemma \ref{lem:regseq} applies.

\emph{(3)$\Rightarrow$(2).} Suppose $g_1,\dots,g_n$ is a regular sequence. The Koszul complex on $g_1,\dots,g_n$ then resolves $R/I$, so
\[
\HS_{R/I}(t)=\frac{(1-t^2)^n}{(1-t)^n}=(1+t)^n,
\]
whence $\dim_kR/I=2^n$. Localize at $\mfrak$: the ring $A=R_\mfrak$ is regular local of dimension $n$, hence Cohen--Macaulay and formally equidimensional, and $IA$ is generated by a system of parameters, so
\[
e(IA)=\len(A/IA)=\dim_kR/I=2^n .
\]
On the other hand $e(\mfrak^2A)=2^ne(\mfrak A)=2^n$. Since $I\subseteq\mfrak^2$ and both are $\mfrak$-primary, it follows from Theorem \ref{thm:rees} that $\overline{IA}=\overline{\mfrak^2A}=\mfrak^2A$. Integral closure commutes with localization \cite[Proposition 1.1.4]{SH06}, so $\bI A=\mfrak^2A$; contracting to $R$ and using that the homogeneous $\mfrak$-primary ideals $\bI$ and $\mfrak^2$ are contracted from $A$, we get $\bI=\mfrak^2$.

\emph{(2)$\Rightarrow$(1)} is trivial.

It remains to prove (3)$\Leftrightarrow$(4); by the equivalence of (2) and (3) with $V(I)=\{0\}$, established above, it suffices to show
\[
V(I)\neq\{0\}\iff I\ \text{has a witness}.
\]

Suppose $p\in V(I)\setminus\{0\}$ and let $S=\supp(p)\ne\emptyset$. By Lemma \ref{lem:stratify}, $S$ is closed and the system $q^{w_i}=-\epsilon_i$ $(i\in S)$ has the solution $q=p|_S$ in $(k^\times)^S$. By Lemma \ref{lem:divisible} applied with $A=k^\times$, $N=m=|S|$ and $c_i=-\epsilon_i$, this forces $\sigma_S(\lambda)=\prod_{i\in S}(-\epsilon_i)^{\lambda_i}=1$ for all $\lambda\in L_S$. So $S$ is a witness.

Conversely, let $S$ be a witness. Since $\sigma_S\equiv1$ on $L_S$, Lemma \ref{lem:divisible} produces $q\in(k^\times)^S$ with $q^{w_i}=-\epsilon_i$ for all $i\in S$. Extending $q$ by zero to a point $p\in k^n$ with $\supp(p)=S\ne\emptyset$, Lemma \ref{lem:stratify} gives $p\in V(I)\setminus\{0\}$.
\end{proof}

\subsection{First consequences}

\begin{rem}[Parity reduction]\label{rem:parity}
Because $\epsilon_i^2=1$, the value $(-\epsilon_i)^{\lambda_i}$ depends only on the parity of $\lambda_i$: it is $1$ for $\lambda_i$ even and $-\epsilon_i$ for $\lambda_i$ odd. Hence $\sigma_S(\lambda)$ depends only on $\lambda\bmod2$. On the other side, reducing $w_i$ modulo $2$ kills the term $2e_i$ and leaves
\[
\bar w_i=e_{a_i}+e_{b_i}\in\FF_2^S,
\]
the indicator vector of the two-element set $\{a_i,b_i\}$. Let $\overline{M_S}$ be the reduction of $M_S$ modulo $2$. In particular:
\begin{enumerate}[(i)]
\item if $m_i=m_j$ for $i\ne j$ in $S$, then $\bar w_i=\bar w_j$, so $e_i+e_j\in\ker\overline{M_S}$;
\item $\rank_\ZZ M_S\geq\rank_{\FF_2}\overline{M_S}$, and $\lambda\bmod2\in\ker\overline{M_S}$ for every $\lambda\in L_S$.
\end{enumerate}
Notice that $L_S\bmod2$ can be a proper subspace of $\ker\overline{M_S}$ (see Case C in \S\ref{subsec:C}), so (ii) is only a bound.
\end{rem}

The following lemma is the practical tool for evaluating condition (4): it reduces the determination of $L_S$ to exhibiting one relation and a rank count.

\begin{lem}\label{lem:primitive}
Let $S$ be closed, $N=|S|$, and suppose $\lambda_0\in L_S$ is primitive \textup{(}i.e. $\gcd$ of its entries is $1$\textup{)} and $\rank_\ZZ M_S=N-1$. Then $L_S=\ZZ\lambda_0$, and $S$ is sign-trivial if and only if $\sigma_S(\lambda_0)=1$. If moreover $\dim_{\FF_2}\ker\overline{M_S}=1$, then $\lambda_0\bmod2$ is the unique nonzero element of $\ker\overline{M_S}$.
\end{lem}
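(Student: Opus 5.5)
The plan is to use rank--nullity over $\mathbb{Q}$ and the saturation of kernels in $\ZZ^S$.

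First I show $L_S=\ZZ\lambda_0$. Since $M_S\colon\ZZ^S\to\ZZ^S$ has rank $N-1$, the kernel of $M_S\otimes\mathbb{Q}$ is a line in $\mathbb{Q}^S$. That line contains $\lambda_0\neq0$, so it equals $\mathbb{Q}\lambda_0$. Hence $L_S=\ker M_S=\mathbb{Q}\lambda_0\cap\ZZ^S$.

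Now take $\mu\in L_S$ and write $\mu=(r/s)\lambda_0$ with $\gcd(r,s)=1$ and $s>0$. Then $s\mu=r\lambda_0$, so $s$ divides $r\lambda_{0,i}$ for every $i$. Since $\gcd(r,s)=1$, $s$ divides every entry of $\lambda_0$, and primitivity forces $s=1$. Thus $\mu\in\ZZ\lambda_0$, which gives $L_S=\ZZ\lambda_0$.

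Next, $\sigma_S$ is a group homomorphism on $L_S=\ZZ\lambda_0$, so $\sigma_S(t\lambda_0)=\sigma_S(\lambda_0)^t$. Therefore $\sigma_S\equiv1$ if and only if $\sigma_S(\lambda_0)=1$, which is exactly the sign-triviality claim.

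For the last assertion I use Remark \ref{rem:parity}(ii): $\lambda_0\bmod2$ lies in $\ker\overline{M_S}$. Because $\lambda_0$ is primitive, some entry is odd, so $\lambda_0\bmod2\neq0$. A one-dimensional $\FF_2$-space has exactly one nonzero element, so $\lambda_0\bmod2$ is that element.

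The only real obstacle is the saturation step $\mathbb{Q}\lambda_0\cap\ZZ^S=\ZZ\lambda_0$. The gcd argument above handles it directly, and everything else is formal.
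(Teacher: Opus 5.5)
Your proof is correct and follows essentially the same route as the paper. The paper simply notes that $L_S$ is a saturated subgroup of rank one, so $L_S=\ZZ\mu$ with $\mu$ primitive, and primitivity of $\lambda_0=c\mu$ forces $c=\pm1$. You instead verify the saturation step $\mathbb{Q}\lambda_0\cap\ZZ^S=\ZZ\lambda_0$ directly by the gcd argument, and your sign-character and parity steps match the paper's.
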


\begin{proof}
$L_S=\ker M_S$ is a saturated subgroup of $\ZZ^S$ of rank $N-\rank M_S=1$, hence $L_S=\ZZ\mu$ for a primitive $\mu$. Writing $\lambda_0=c\mu$ and using primitivity of $\lambda_0$ gives $c=\pm1$, so $L_S=\ZZ\lambda_0$. As $\sigma_S$ is a homomorphism, $\sigma_S\equiv1$ iff $\sigma_S(\lambda_0)=1$. Finally $\lambda_0\bmod2\ne0$ by primitivity and $\lambda_0\bmod2\in\ker\overline{M_S}$ by Remark \ref{rem:parity}(ii).
\end{proof}

Three consequences of Theorem~\ref{thm:main-intro} for arbitrary $n$ are immediate. The first confirms the experimental observation about negative signs.

\begin{cor}\label{cor:allminus}
If $\epsilon_i=-1$ for all $i$, then $I$ is not weakly NND. More generally, in characteristic $2$ no ideal as in Setup \ref{setup} is weakly NND.
\end{cor}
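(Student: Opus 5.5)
By Theorem \ref{thm:main-intro}, in both situations it suffices to exhibit a witness, i.e.\ a nonempty closed sign-trivial $S$. Equivalently, we can produce a nonzero point of $V(I)$ directly and invoke the equivalence of (1) and (3) via Lemma \ref{lem:regseq}. I will take $S=\{1,\dots,n\}$ in both cases. This set is trivially closed: the first condition holds since all indices lie in $S$, and the second condition is vacuous because there is no $i\notin S$.

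\emph{All signs negative.} If $\epsilon_i=-1$ for all $i$, then $-\epsilon_i=1$, so $\sigma_S(\lambda)=\prod_i 1^{\lambda_i}=1$ for every $\lambda\in L_S$. Thus $S$ is sign-trivial and hence a witness, and $I$ is not weakly NND. As a sanity check that avoids the lattice entirely, the point $p=(1,\dots,1)$ satisfies $g_i(p)=1-1=0$ for all $i$. So $p\in V(I)\setminus\{0\}$ and the generators fail to be a regular sequence by Lemma \ref{lem:regseq}. I would present this one-line point as the proof and mention the witness interpretation as a remark.

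\emph{Characteristic $2$.} Here $-1=1$ in $k$, so $-\epsilon_i=1$ in $k^\times$ for every sign pattern. The point to be careful about is that $\sigma_S$ was defined with values in $\{\pm1\}$. What Lemma \ref{lem:divisible} actually needs is $\prod_i c_i^{\lambda_i}=1$ in $A=k^\times$, with $c_i=-\epsilon_i$. In characteristic $2$ every $c_i$ equals $1$ in $k^\times$, so the condition holds automatically. Consequently the full-support stratum has the solution $q=(1,\dots,1)$ by Lemma \ref{lem:stratify}.

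Again this is most cleanly verified directly: $g_i(1,\dots,1)=1+\epsilon_i=1\pm1$, which is $0$ in characteristic $2$ (for $\epsilon_i=1$ because $2=0$). So $V(I)\ne\{0\}$, and Theorem \ref{thm:main-intro} gives that $I$ is not weakly NND.

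The only genuine subtlety, and thus the step to state carefully, is the interpretation of sign-triviality in characteristic $2$. It must be read in $k^\times$, where $\pm1$ coincide, consistent with how Lemma \ref{lem:divisible} is applied in the proof of Theorem \ref{thm:main-intro}. The explicit point $(1,\dots,1)$ sidesteps this issue.
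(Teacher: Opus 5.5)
Your proof is correct and matches the paper's: the full set $S=\{1,\dots,n\}$ is closed and, since $-\epsilon_i=1$ in both situations, sign-trivial, and the explicit point $(1,\dots,1)\in V(I)\setminus\{0\}$ confirms it. Your remark that in characteristic $2$ sign-triviality has to be read in $k^\times$, as in the application of Lemma \ref{lem:divisible}, and not in $\{\pm1\}\subset\ZZ$, is exactly the implicit reading behind the paper's phrase ``$-\epsilon_i=1$ for all $i$''.
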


\begin{proof}
In both cases $-\epsilon_i=1$ for all $i$, so $\sigma_S\equiv1$ for every $S$; the full set $S=\{1,\dots,n\}$ is closed, since its first condition is automatic and its second is vacuous, hence it is a witness. Concretely, $p=(1,1,\dots,1)$ satisfies $g_i(p)=1+\epsilon_i=0$, so $V(I)\ne\{0\}$.
\end{proof}

\begin{cor}\label{cor:constant}
Suppose all generators involve the same monomial, $m_1=\dots=m_n=x_ax_b$ with $a\ne b$, and $\operatorname{char}k\ne2$. Then
\[
I\ \text{is weakly NND}\iff\epsilon_a\epsilon_b=-1 .
\]
In particular, for every $n\geq2$ the ideal $I=(x_1^2-x_1x_2,\ x_2^2+x_1x_2,\ x_3^2-x_1x_2,\ \dots,\ x_n^2-x_1x_2)$ is weakly NND, so the family of Setup \ref{setup} contains weakly NND members for all $n$.
\end{cor}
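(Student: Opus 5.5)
The plan is to verify condition (4) of Theorem \ref{thm:main-intro} directly, by first pinning down the closed sets and then computing the relation lattice of the only candidate.

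\emph{Step 1: the only closed set is $\{1,\dots,n\}$.} Let $S$ be nonempty and closed, and pick $i\in S$. Closedness gives $a_i,b_i\in S$, and by hypothesis $\{a_i,b_i\}=\{a,b\}$, so $\{a,b\}\subseteq S$. Now take any $j\notin S$. Its monomial is also $m_j=x_ax_b$, so $\{a_j,b_j\}=\{a,b\}\subseteq S$, which violates the second closedness condition. Hence no such $j$ exists and $S=\{1,\dots,n\}$. Conversely, $\{1,\dots,n\}$ is closed, as already noted in the proof of Corollary \ref{cor:allminus}. So $I$ has a witness if and only if $S=\{1,\dots,n\}$ is sign-trivial.

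\emph{Step 2: compute $L_S$ via Lemma \ref{lem:primitive}.} The vectors are:
\begin{itemize}
\item $w_a=e_a-e_b$ and $w_b=e_b-e_a$;
\item $w_i=2e_i-e_a-e_b$ for $i\notin\{a,b\}$.
\end{itemize}
Thus $\lambda_0=e_a+e_b$ lies in $L_S$, and it is primitive. For the rank, consider the $n-1$ vectors $w_a$ and $w_i$ with $i\notin\{a,b\}$. Each $w_i$ with $i\notin\{a,b\}$ is the only one of these vectors with a nonzero $e_i$-coordinate. Therefore any vanishing linear combination of these vectors has zero coefficient on every such $w_i$, and then also on $w_a\neq0$. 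So these $n-1$ vectors are linearly independent, and $\rank_\ZZ M_S=n-1$. Lemma \ref{lem:primitive} now gives $L_S=\ZZ\lambda_0$, and $S$ is sign-trivial if and only if
\[
\sigma_S(\lambda_0)=(-\epsilon_a)(-\epsilon_b)=\epsilon_a\epsilon_b=1 .
\]

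\emph{Step 3: conclude and check the example.} By Theorem \ref{thm:main-intro}, $I$ is weakly NND if and only if there is no witness. By Steps 1 and 2 this happens if and only if $\epsilon_a\epsilon_b\neq1$, i.e.\ $\epsilon_a\epsilon_b=-1$. The hypothesis $\operatorname{char}k\ne2$ is what makes this sign comparison meaningful. In characteristic $2$ we have $-1=1$ in $k$, so $\sigma_S\equiv1$ automatically, consistent with Corollary \ref{cor:allminus}. For the explicit family, take $a=1$, $b=2$, $\epsilon_1=-1$ and $\epsilon_2=+1$. Then $\epsilon_a\epsilon_b=-1$, so that ideal is weakly NND for every $n\ge2$.

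The only mildly delicate point is Step 1, which removes the quantifier over $S$. It works because every generator shares the same support $\{a,b\}$, which leaves no room for a proper nonempty closed set. The rank count in Step 2 is routine once the unique $2e_i$ terms are noticed.
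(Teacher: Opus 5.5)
Your proposal is correct and follows the paper's proof essentially step for step: you show $\{1,\dots,n\}$ is the only nonempty closed set, obtain $L_S=\ZZ(e_a+e_b)$ via Lemma \ref{lem:primitive}, and evaluate $\sigma_S(e_a+e_b)=\epsilon_a\epsilon_b$. The only cosmetic difference is in the rank count: you use linear independence of $w_a$ and the $w_i$ with $i\notin\{a,b\}$ (the upper bound coming implicitly from $w_b=-w_a$), whereas the paper solves $\sum_i\lambda_iw_i=0$ coordinatewise.
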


\begin{proof}
If $S$ is nonempty and closed then $a,b\in S$ (take any $i\in S$), so no index can lie outside $S$ (the second closure condition would fail for it); hence $S=\{1,\dots,n\}$ is the only candidate, and it is closed. Here $w_a=e_a-e_b$, $w_b=e_b-e_a$, and $w_i=2e_i-e_a-e_b$ for $i\neq a,b$. In a relation $\sum_i\lambda_iw_i=0$, the $i$-th coordinate for $i\ne a,b$ is $2\lambda_i$, forcing $\lambda_i=0$; the remaining condition $(\lambda_a-\lambda_b)(e_a-e_b)=0$ gives $\lambda_a=\lambda_b$. Thus $\rank M_S=n-1$ and $\lambda_0=e_a+e_b$ is a primitive relation, so $L_S=\ZZ(e_a+e_b)$ by Lemma \ref{lem:primitive}. Now $\sigma_S(\lambda_0)=(-\epsilon_a)(-\epsilon_b)=\epsilon_a\epsilon_b$, and $S$ fails to be a witness precisely when $\epsilon_a\epsilon_b\ne1$, i.e. $\epsilon_a\epsilon_b=-1$ since $\operatorname{char}k\ne2$.
\end{proof}

\begin{cor}\label{cor:cycle}
Let $\operatorname{char}k\ne2$ and let $I$ be the \emph{cycle ideal}
\[
I=(g_1,\dots,g_n),\qquad g_i=x_i^2+\epsilon_ix_ix_{i+1}\quad(\text{indices modulo }n).
\]
Then $I$ is weakly NND if and only if $\prod_{i=1}^n\epsilon_i=(-1)^{n+1}$.
\end{cor}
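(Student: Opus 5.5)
Apply Theorem \ref{thm:main-intro}(4): determine the closed sets $S$, compute $L_S$, and evaluate $\sigma_S$ on it. Here $a_i=i$ and $b_i=i+1$, so $m_i=x_ix_{i+1}$ and $w_i=e_i-e_{i+1}$ (indices mod $n$).

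\emph{Closed sets.} Let $S$ be nonempty and closed. If $i\in S$, the first closure condition forces $i+1\in S$. Iterating around the cycle gives $S=\{1,\dots,n\}$. Conversely, the full set is closed, since its second condition is vacuous. So the only candidate witness is $S=\{1,\dots,n\}$.

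\emph{The lattice.} The vectors $w_i=e_i-e_{i+1}$ are the edge vectors of a directed $n$-cycle. Their span is the sum-zero sublattice of $\ZZ^n$, of rank $n-1$. The relation $\lambda_0=(1,\dots,1)$ holds because $\sum_i(e_i-e_{i+1})=0$, and it is primitive. Lemma \ref{lem:primitive} therefore gives $L_S=\ZZ\lambda_0$, and $S$ is sign-trivial exactly when $\sigma_S(\lambda_0)=1$. This relation can also be checked directly: the $i$-th coordinate of $\sum\lambda_jw_j$ is $\lambda_i-\lambda_{i-1}$, so every relation has all $\lambda_i$ equal.

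\emph{Sign evaluation.} We have $\sigma_S(\lambda_0)=\prod_{i=1}^n(-\epsilon_i)=(-1)^n\prod\epsilon_i$. Thus $S$ is a witness if and only if $\prod\epsilon_i=(-1)^n$. Since $\operatorname{char}k\ne2$, the only other value is $\prod\epsilon_i=(-1)^{n+1}$. So $I$ has no witness, and hence is weakly NND, exactly when $\prod\epsilon_i=(-1)^{n+1}$.

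The argument has no real obstacle. The only care needed is the rank count: check that $\rank M_S=n-1$, which is immediate from the explicit kernel computation. The characteristic hypothesis is used only to identify $\ne1$ with $=-1$, in the same way as in Corollary \ref{cor:constant}.
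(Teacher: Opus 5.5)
Your proposal is correct and follows the paper's proof essentially step for step. As in the paper, you show that the only nonempty closed set is $\{1,\dots,n\}$ by following $i\Rightarrow i+1$ around the cycle. You then get $L_S=\ZZ(1,\dots,1)$ from the coordinate comparison $\lambda_i=\lambda_{i-1}$ together with Lemma \ref{lem:primitive}, and evaluate $\sigma_S(1,\dots,1)=(-1)^n\prod_i\epsilon_i$, using $\operatorname{char}k\ne2$ to turn $\ne1$ into $=-1$.
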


\begin{proof}
Here $\{a_i,b_i\}=\{i,i+1\}$, so a nonempty closed $S$ satisfies $i\in S\Rightarrow i+1\in S$ and hence $S=\{1,\dots,n\}$, which is closed. From $w_i=e_i-e_{i+1}$ we get, comparing the coefficient of $e_j$ in $\sum_i\lambda_iw_i=0$, that $\lambda_j=\lambda_{j-1}$ for all $j$; thus $\rank M_S=n-1$ and $L_S=\ZZ(1,\dots,1)$ by Lemma \ref{lem:primitive}. Now $\sigma_S(1,\dots,1)=\prod_i(-\epsilon_i)=(-1)^n\prod_i\epsilon_i$, and $S$ fails to be a witness exactly when this is $\ne1$, i.e. $\prod_i\epsilon_i=(-1)^{n+1}$.
\end{proof}

\section{The case $n=3$}\label{sec:n3}

Throughout this section $n=3$, $\operatorname{char}k\ne2$, and we write
\[
I=(x^2+\epsilon_1m_1,\ y^2+\epsilon_2m_2,\ z^2+\epsilon_3m_3),\qquad m_i\in\{xy,xz,yz\},
\]
identifying positions $1,2,3$ with the variables $x,y,z$. There are $3^3\cdot2^3=216$ such ideals. We now use condition (4) of Theorem \ref{thm:main-intro} to characterize completely which of these ideals are weakly NND; the support patterns fall into three cases, and each is settled by Lemma \ref{lem:primitive}.

\begin{lem}\label{lem:closedn3}
Let $S\subseteq\{1,2,3\}$ be nonempty and closed. Then:
\begin{enumerate}[(i)]
\item $|S|\neq1$;
\item $S=\{1,2,3\}$ is always closed;
\item $S=\{i,j\}$ with $i\ne j$ is closed if and only if $m_i=m_j=x_ix_j$ and $m_l\ne x_ix_j$, where $l$ is the third index.
\end{enumerate}
\end{lem}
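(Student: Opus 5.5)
The plan is to unwind the definition of closedness directly, using only that $n=3$ and $a_i\neq b_i$, so that each $m_i$ is a product of two distinct variables from $\{x,y,z\}$.

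For (i), suppose $S=\{i\}$. Closedness requires $a_i,b_i\in S$. But $a_i\neq b_i$, so $\{a_i,b_i\}$ has two elements and cannot lie inside a singleton. This is a contradiction, so $|S|\neq1$.

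For (ii), take $S=\{1,2,3\}$. The first closure condition holds because every $a_i,b_i$ lies in $\{1,2,3\}$. The second condition is vacuous, since no index lies outside $S$. This is the same observation already used in the proof of Corollary \ref{cor:allminus}.

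For (iii), let $S=\{i,j\}$ and let $l$ be the third index. The two halves of closedness translate as follows.
\begin{itemize}
\item The first half says $\{a_i,b_i\}\subseteq\{i,j\}$ and $\{a_j,b_j\}\subseteq\{i,j\}$. Since each of these is a two-element set, both must equal $\{i,j\}$, i.e. $m_i=m_j=x_ix_j$.
\item The second half concerns only $l$. It says $\{a_l,b_l\}\not\subseteq\{i,j\}$. Because $\{a_l,b_l\}$ is a two-element subset of $\{1,2,3\}$, it lies inside $\{i,j\}$ exactly when it equals $\{i,j\}$. So the condition is $m_l\neq x_ix_j$.
\end{itemize}
Both directions of the equivalence follow from these two translations, since each step is reversible.

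No step presents a real obstacle. The only point needing care is the repeated observation that a two-element subset of a two-element set is the whole set. This is exactly what turns the containment conditions into equalities of monomials.
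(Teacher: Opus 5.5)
Your proposal is correct and follows the paper's proof essentially line by line. Parts (i) and (ii) are the same direct checks. In (iii) you use, exactly as the paper does, that a two-element set $\{a_k,b_k\}$ contained in $\{i,j\}$ must equal $\{i,j\}$, both to force $m_i=m_j=x_ix_j$ and to rewrite the condition on $l$ as $m_l\neq x_ix_j$.
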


\begin{proof}
(i) If $S=\{i\}$, closedness requires $a_i,b_i\in S$, impossible since $a_i\ne b_i$.

(ii) For $S=\{1,2,3\}$ the first condition holds trivially and the second is vacuous.

(iii) For $S=\{i,j\}$ and $k\in S$, closedness requires $\{a_k,b_k\}\subseteq\{i,j\}$; as $a_k\ne b_k$ this means $\{a_k,b_k\}=\{i,j\}$, i.e. $m_k=x_ix_j$, for both $k=i$ and $k=j$. The second condition applied to $l\notin S$ says $\{a_l,b_l\}\not\subseteq\{i,j\}$, i.e. $m_l\ne x_ix_j$. Conversely, these conditions clearly give closedness.
\end{proof}

There is a subtlety in (iii): a pair $S=\{i,j\}$ is closed not merely when the two positions $i,j$ carry the same monomial, but when that shared monomial is exactly $x_ix_j$, so that its variable support coincides with the set of positions carrying it. This coincidence splits the \enquote{one repeated monomial} case into two genuinely different sub-cases below. Since $\{m_1,m_2,m_3\}$ either consists of three distinct monomials, or has exactly one repeat, or is constant, we obtain three cases.

\subsection{Case A: $m_1,m_2,m_3$ pairwise distinct}\label{subsec:A}

Here $\{m_1,m_2,m_3\}=\{xy,xz,yz\}$, so $i\mapsto\{a_i,b_i\}$ is a bijection $\tau$ from positions to the three $2$-subsets of $\{1,2,3\}$. By Lemma \ref{lem:closedn3}(iii) no pair is closed (a pair would need a repeated monomial), so $S=\{1,2,3\}$ is the only closed set.

Since $\tau$ is a bijection, each index $l$ lies in exactly two of the three sets $\tau(1),\tau(2),\tau(3)$, whence
\[
\sum_{i=1}^3w_i=2(e_1+e_2+e_3)-\sum_{i=1}^3(e_{a_i}+e_{b_i})=2(e_1+e_2+e_3)-2(e_1+e_2+e_3)=0,
\]
\emph{for every bijection $\tau$}. So $\lambda_0=(1,1,1)$ is a primitive element of $L_S$. Moreover $\bar w_1,\bar w_2,\bar w_3$ are the three distinct nonzero indicator vectors $(1,1,0),(1,0,1),(0,1,1)$ of $\FF_2^3$, which span a $2$-dimensional space; hence $\rank_\ZZ M_S\geq2$ by Remark \ref{rem:parity}(ii), and $\rank_\ZZ M_S=2$ because $L_S\ne0$. Lemma \ref{lem:primitive} gives $L_S=\ZZ(1,1,1)$ and
\[
\sigma_S(1,1,1)=(-\epsilon_1)(-\epsilon_2)(-\epsilon_3)=-\epsilon_1\epsilon_2\epsilon_3 .
\]
So $S$ is a witness iff $-\epsilon_1\epsilon_2\epsilon_3=1$, and Theorem~\ref{thm:main-intro} yields
\[
\boxed{\ I\ \text{is weakly NND}\iff\epsilon_1\epsilon_2\epsilon_3=+1.\ }
\]
This is independent of which of the six bijections $\tau$ occurs.

\subsection{Case B: exactly one monomial is repeated, at two positions}\label{subsec:B}

Say $m_i=m_j$ for two positions $i\ne j$, and $m_l\ne m_i$ for the third position $l$. Whether the pair $\{i,j\}$ is closed depends on whether the shared monomial is $x_ix_j$.

\subsubsection*{Case B1: the shared monomial is $m_i=m_j=x_ix_j$}
Now $S=\{i,j\}$ is closed by Lemma \ref{lem:closedn3}(iii), since $m_l\ne m_i=x_ix_j$. In the coordinates $(\lambda_i,\lambda_j)$ on $\ZZ^S$ we have $w_i=2e_i-e_i-e_j=e_i-e_j$ and $w_j=e_j-e_i=-w_i$, so $\rank M_S=1$ and $\lambda_0=(1,1)$ is a primitive relation; by Lemma \ref{lem:primitive}, $L_S=\ZZ(1,1)$ and
\[
\sigma_S(1,1)=(-\epsilon_i)(-\epsilon_j)=\epsilon_i\epsilon_j .
\]
We must also test the always-closed set $S'=\{1,2,3\}$. Since $w_l$ has nonzero $l$-th coordinate while
$w_i$ and $w_j$ do not, a relation $\lambda_iw_i+\lambda_jw_j+\lambda_lw_l=0$ forces $\lambda_l=0$ and,
as before, $\lambda_i=\lambda_j$; thus $L_{S'}=\ZZ(1,1,0)$ is simply the pair relation padded by a zero,
and $\sigma_{S'}(1,1,0)=\epsilon_i\epsilon_j$ again. Both closed sets give the same condition, so
\[
\boxed{\ I\ \text{is weakly NND}\iff\epsilon_i\epsilon_j=-1,\qquad\epsilon_l\ \text{free}.\ }
\]

\subsubsection*{Case B2: the shared monomial is not $x_ix_j$}
Now $S=\{i,j\}$ fails closedness, and no other pair is closed either: a closed pair $\{i',j'\}$ needs $m_{i'}=m_{j'}$, and $\{i,j\}$ is the only pair of positions carrying equal monomials. So $S=\{1,2,3\}$ is the only closed set.

Since $m_i=m_j$ and $m_l\ne m_i$, the reductions $\bar w_i=\bar w_j$ and $\bar w_l$ are distinct nonzero indicator vectors in $\FF_2^3$, hence independent: $\rank_{\FF_2}\overline{M_S}=2$ and $\dim_{\FF_2}\ker\overline{M_S}=1$, with
\[
\ker\overline{M_S}=\{0,\,e_i+e_j\} .
\]
By Remark \ref{rem:parity}(ii), $\rank_\ZZ M_S\geq2$. To see that the rank is exactly $2$ we exhibit a relation. Since $m_i=m_j\ne x_ix_j$, after possibly exchanging the names of $i$ and $j$ we may assume $m_i=m_j=x_ix_l$. Then $w_i=e_i-e_l$ and $w_j=2e_j-e_i-e_l$, so $w_i+w_j=2(e_j-e_l)$. There are two possibilities for $m_l$:
\begin{itemize}
\item $m_l=x_jx_l$: then $w_l=e_l-e_j$ and $w_i+w_j+2w_l=0$, giving $\lambda_0=(1,1,2)$;
\item $m_l=x_ix_j$: then $w_l=2e_l-e_i-e_j$ and $w_i+w_j+2w_l=2e_l-2e_i=-2w_i$, giving $3w_i+w_j+2w_l=0$, i.e. $\lambda_0=(3,1,2)$;
\end{itemize}
in the coordinates $(\lambda_i,\lambda_j,\lambda_l)$. In both cases $\lambda_0$ is primitive and $\rank_\ZZ M_S=2$, so $L_S=\ZZ\lambda_0$ by Lemma \ref{lem:primitive}. Since $\lambda_0\equiv e_i+e_j\pmod2$, the parity reduction of Remark \ref{rem:parity} gives $\sigma_S(\lambda_0)=\epsilon_i\epsilon_j$, and hence, exactly as in Case B1,
\[
\boxed{\ I\ \text{is weakly NND}\iff\epsilon_i\epsilon_j=-1,\qquad\epsilon_l\ \text{free}.\ }
\]
\subsection{Case C: $m_1=m_2=m_3=x_ix_j$}\label{subsec:C}

Let $l$ be the third index. No pair is closed: for the pair $\{i,j\}$ the condition $m_l\ne x_ix_j$ fails, and for a pair containing $l$, say $\{i,l\}$, we would need $m_i=x_ix_l\ne x_ix_j$. So $S=\{1,2,3\}$ is the only closed set. Here the bound modulo $2$ is not sharp: the three vectors $\bar w_1=\bar w_2=\bar w_3=e_i+e_j$ coincide, so $\dim_{\FF_2}\ker\overline{M_S}=2$ and the last clause of Lemma \ref{lem:primitive} does not apply. We therefore argue over $\ZZ$ directly. Here
\[
w_i=e_i-e_j,\qquad w_j=e_j-e_i=-w_i,\qquad w_l=2e_l-e_i-e_j,
\]
and $w_i,w_l$ are independent (the $l$-th coordinate of $w_l$ is $2$, that of $w_i$ is $0$), so $\rank_\ZZ M_S=2$ and, as in Case B1, $L_S=\ZZ(1,1,0)$ in the coordinates $(\lambda_i,\lambda_j,\lambda_l)$. Consequently $\sigma_S(1,1,0)=\epsilon_i\epsilon_j$ and
\[
\boxed{\ I\ \text{is weakly NND}\iff\epsilon_i\epsilon_j=-1,\qquad\epsilon_l\ \text{free},\ }
\]
where now $\{i,j\}$ is the pair of positions whose own indices occur in the common monomial $x_ix_j$, and $l$ is the third, unrelated position.

\subsection{The classification}

\begin{cor}\label{cor:n3}
Let $\operatorname{char}k\ne2$ and $I=(x^2+\epsilon_1m_1,\,y^2+\epsilon_2m_2,\,z^2+\epsilon_3m_3)$, where each $m_i$ belongs to $\{xy,xz,yz\}$.
\begin{enumerate}[(i)]
\item If $m_1,m_2,m_3$ are pairwise distinct, then $I$ is weakly NND if and only if $\epsilon_1\epsilon_2\epsilon_3=+1$.
\item If some monomial repeats, let the \emph{critical pair} $\{i,j\}$ be
\begin{itemize}
\item the two positions carrying the repeated monomial, if a monomial repeats exactly twice;
\item the two positions $i,j$ with $m_1=m_2=m_3=x_ix_j$, if a monomial repeats three times.
\end{itemize}
Then $I$ is weakly NND if and only if $\epsilon_i\epsilon_j=-1$; the third sign is free.
\end{enumerate}
\end{cor}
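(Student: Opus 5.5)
The plan is to assemble Corollary \ref{cor:n3} from the case analysis already carried out in \S\ref{subsec:A}--\S\ref{subsec:C}, using Theorem~\ref{thm:main-intro} in the form \enquote{$I$ is weakly NND iff $I$ has no witness}. First I will check that the three cases are exhaustive. The multiset $\{m_1,m_2,m_3\}$ drawn from the three monomials $xy,xz,yz$ has either three distinct values, exactly one value occurring twice, or one value occurring three times. The middle option splits further according to whether the repeated monomial equals $x_ix_j$ for the two positions $i,j$ carrying it (Case B1) or not (Case B2).

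Next I will use Lemma \ref{lem:closedn3} to list the nonempty closed sets in each case: only $\{1,2,3\}$ in Cases A, B2 and C, and $\{i,j\}$ together with $\{1,2,3\}$ in Case B1. Since $|S|=1$ is never closed and pairs are closed only under (iii), this list is complete. For each closed $S$ the computations above give a primitive generator $\lambda_0$ of $L_S$ via Lemma \ref{lem:primitive}, or by direct argument in Case C, together with the value $\sigma_S(\lambda_0)$:
\begin{itemize}
\item in Case A, $\sigma_S(\lambda_0)=-\epsilon_1\epsilon_2\epsilon_3$;
\item in Cases B1, B2 and C, $\sigma_S(\lambda_0)=\epsilon_i\epsilon_j$ for the critical pair.
\end{itemize}
In Case B1 both closed sets yield the same value. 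The parity reduction of Remark \ref{rem:parity} justifies reading off the sign from $\lambda_0\bmod 2$.

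A witness therefore exists iff $-\epsilon_1\epsilon_2\epsilon_3=1$ in Case A, and iff $\epsilon_i\epsilon_j=1$ otherwise. Because $\operatorname{char}k\ne2$, we have $\pm1$ distinct, so \enquote{no witness} is equivalent to $\epsilon_1\epsilon_2\epsilon_3=+1$, respectively $\epsilon_i\epsilon_j=-1$. Finally I will check that the definition of the critical pair in (ii) matches the pair $\{i,j\}$ used in each subcase. In B1 and B2 it is the two positions carrying the repeated monomial. In Case C it is the pair indexing the common monomial. In each situation the third sign $\epsilon_l$ does not appear in $\sigma_S$, so it is free.

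The main obstacle is making sure no case or closed set is overlooked, particularly Case B1, where a second closed set appears and must give the same sign condition. Also in Case C the mod-$2$ shortcut fails, so $L_S=\ZZ(1,1,0)$ must be verified over $\ZZ$. Both points are already handled in the subsections, so the proof is essentially bookkeeping.
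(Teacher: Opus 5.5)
Your proposal is correct and is essentially the paper's proof: the corollary follows by noting that Cases A, B1, B2 and C are exhaustive, by Lemma \ref{lem:closedn3} and the trichotomy on $(m_1,m_2,m_3)$, and then reading off the sign condition computed in each subsection. Your explicit checks are exactly the bookkeeping the paper leaves implicit in its one-line proof: both closed sets in Case B1 give $\epsilon_i\epsilon_j$, $\operatorname{char}k\neq 2$ makes $\pm1$ distinct, and the critical pair matches each subcase.
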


\begin{proof}
Cases A, B1, B2, C of \S\S\ref{subsec:A}--\ref{subsec:C} are exhaustive by Lemma \ref{lem:closedn3} and the trichotomy on $(m_1,m_2,m_3)$, and prove (i) and (ii).
\end{proof}

\section{Examples at $n=4$}\label{sec:n4}

At $n=4$ the interplay between the closed sets and the sign character is richer than for $n=3$. In the first example below, the full set is the only closed set, so it alone has to be tested. In the second, a proper subset is a witness, and it is strictly larger than the pair of positions carrying the repeated monomial, so it cannot be read off from the support pattern by inspection. Remark \ref{rem:quantifier} then shows that the quantifier over $S$ in Theorem~\ref{thm:main-intro}(4) is genuinely necessary: a proper subset can be a witness while the full set is not. All ideals below lie in $R=k[x_1,x_2,x_3,x_4]$.

\begin{exmp}[Degenerate, full support $S=\{1,2,3,4\}$]\label{ex:full4}
\[
I=(x_1^2+x_2x_3,\ x_2^2+x_3x_4,\ x_3^2+x_1x_4,\ x_1x_2+x_4^2).
\]
No proper nonempty subset of $\{1,2,3,4\}$ satisfies the closure condition of Theorem~\ref{thm:main-intro}(4), so $S=\{1,2,3,4\}$ is the only closed set. Here $\sum_{i=1}^4w_i=0$, so $L_S=\ZZ(1,1,1,1)$ and $\sigma_S(1,1,1,1)=1$: the set $S$ is sign-trivial. Accordingly $p=t\cdot(-1,1,-1,1)$ lies in $V(I)$ for all $t\in k$, so $\dim V(I)\geq1$. Hence $\bI\ne\mfrak^2$, and therefore $I$ is not weakly NND.
\end{exmp}

The next example extends the phenomena of Section \ref{sec:n3} in a direction that cannot occur for $n=3$: a witness strictly larger than the pair of positions carrying the repeated monomial.

\begin{exmp}[A proper witness not determined by the repeated-monomial positions]\label{ex:nonadjacent4}
\[
I=(x_1^2+x_1x_4,\ x_2^2+x_1x_4,\ x_3^2+x_1x_3,\ x_4^2+x_1x_2).
\]
Here $m_1=m_2=x_1x_4$ while $m_4=x_1x_2$, so the positions carrying the repeated monomial are $\{1,2\}$, as in Case B2 of \S\ref{subsec:B}. This pair is not closed, since $m_1=x_1x_4$ forces $4\in S$; the smallest closed set containing it is $S=\{1,2,4\}$, strictly larger than $\{1,2\}$ and hence not visible on inspection of which generators share a monomial. In the coordinates $(\lambda_1,\lambda_2,\lambda_4)$ on $S$ the relation lattice is generated by $(3,1,2)$, exactly the exponent pattern of Case B2, and $\sigma_S$ evaluates to $\epsilon_1\epsilon_2$ with $\epsilon_4$ free. Taking $\epsilon_1=\epsilon_2=\epsilon_4=+1$ makes $S$ sign-trivial, and indeed
\[
L=(x_3,\ x_1+x_2,\ x_2-x_4)
\]
lies in $V(I)$ (explicitly, $(x_1,x_2,x_3,x_4)=(-t,t,0,t)$ for $t\in k$): one checks directly that all four generators vanish along $L$. Here $\dim V(I)=1$ and $\bI$ is not a monomial ideal. In this example the full set happens to be a witness as well, since $L_{\{1,2,3,4\}}=\ZZ(3,1,0,2)$ and $\sigma(3,1,0,2)=\epsilon_1\epsilon_2=1$; the next remark shows that this need not happen.
\end{exmp}

\begin{rem}\label{rem:quantifier}
The quantifier over $S$ in Theorem~\ref{thm:main-intro}(4) cannot be replaced by the single test $S=\{1,\dots,n\}$. For $I=(x_1^2+x_1x_2,\,x_2^2+x_1x_2,\,x_3^2+x_3x_4,\,x_4^2-x_3x_4)$ the pair $S=\{1,2\}$ is closed with $L_S=\ZZ(1,1)$ and $\sigma_S(1,1)=\epsilon_1\epsilon_2=1$, so $S$ is a witness and $I$ is not weakly NND; accordingly $(-t,t,0,0)\in V(I)$ for all $t\in k$. The full set is closed as well, but $L_{\{1,2,3,4\}}=\ZZ(1,1,0,0)+\ZZ(0,0,1,1)$ and $\sigma(0,0,1,1)=\epsilon_3\epsilon_4=-1$, so it is \emph{not} sign-trivial: testing it alone would have missed the degeneracy.
\end{rem}

Condition (4) is decidable: there are at most $2^n-1$ subsets $S$ to inspect, and for each one, computing $L_S$ is a matter of integer linear algebra. In practice far fewer than $2^n-1$ sets need to be tested, since the closure condition is quite restrictive. This restrictiveness has a clean combinatorial description. Encode the support data as a directed hypergraph on $\{1,\dots,n\}$, with an edge $i\to\{a_i,b_i\}$ for each $i$. Then the closed sets $S$ are exactly the subsets that are closed under following edges forward and are never entered from outside. In this language, sign-triviality becomes a character condition on the cycle space of the hypergraph.

Finally, the proof of Theorem~\ref{thm:main-intro} uses essentially that all generators are quadrics with the same Newton polyhedron; in particular, if $\bI$ is a monomial ideal then it must be $\mfrak^2$. The natural next family is $I=(x_1^{d}+\epsilon_1m_1,\dots,x_n^{d}+\epsilon_nm_n)$ with $\deg m_i=d$ and $m_i$ not a power of a single variable.

\begin{quest}
Is there an algorithm deciding condition (4) in time polynomial in $n$? And does the equivalence \enquote{weakly NND $\iff$ regular sequence} persist in degree $d$, with condition (4) suitably modified?
\end{quest}

\medskip

\noindent\textbf{AI usage statement.} Claude Opus 5 (Anthropic) was used in preparing this manuscript in two capacities: to computationally verify the classification of Corollary \ref{cor:n3}, by exhaustively checking condition (4) of Theorem \ref{thm:main-intro} against the general criterion for all $216$ ideals of Setup \ref{setup} with $n=3$, and to check the closed sets, relation lattices, and witnesses for the example ideals with $n=4$ in Section \ref{sec:n4}; and to improve the readability of the exposition and correct grammatical errors. All mathematical statements, proofs, and results are the authors' own, who have reviewed and verified all AI-assisted contributions and take full responsibility for the content of this paper.

\medskip

\noindent\textbf{Data availability statement:} All data generated or analysed during this study are included in this published article (and its supplementary information files). 

\medskip

\noindent\textbf{Conflict of interest statement:} All authors declare that they have no conflicts of interest.


\end{document}